\newtheorem{theorem}{Theorem}
\newtheorem{conjecture}[theorem]{Conjecture}
\newtheorem{question}[theorem]{Question}
\title{The $Q_2$-free process in the hypercube} 
\author{J.~Robert Johnson\footnote{School of Mathematical Sciences, Queen Mary
University of London, London E1 4NS, England} \footnote{\tt r.johnson@qmul.ac.uk}\and Trevor Pinto$^*$ \footnote{\tt t.pinto@hotmail.co.uk}}
\begin{document}

\maketitle
\begin{abstract}
The generation of a random triangle-saturated graph via the triangle-free process has been studied extensively. In this short note our aim is to introduce an analogous process in the hypercube. Specifically, we consider the $Q_2$-free process in $Q_d$ and the random subgraph of $Q_d$ it generates. Our main result is that with high probability the graph resulting from this process has at least $cd^{2/3} 2^d$ edges. We also discuss a heuristic argument based on the differential equations method which suggests a stronger conjecture, and discuss the issues with making this rigorous. We conclude with some open questions related to this process.
\end{abstract}

\section{Introduction}

Let $F$ be a (typically small) graph. A graph $G$ on vertex set $V$ is \emph{$F$-saturated} if it contains no copy of $F$ as a subgraph but the addition of any new edge in $V^{(2)}\setminus E(G)$ creates a copy of $F$. The $F$-free process is a well-known way of generating a random $F$-saturated graph. We fix a finite set $V$ and form a nested sequence $G_0,G_1,\dots,G_M$ of $F$-free graphs with common vertex set $V$. For each $i$ we have $|E(G_i)|=i$ and $G_{i+1}$ is obtained from $G_i$ by randomly adding a new edge chosen uniformly at random from all of the possible edges which do not create a copy of $F$. The process stops when no new edge can be added; in other words $G_M$ is an $F$-saturated graph with $M$ edges. We can now ask: what can be said about the properties of the graph $G_M$, and in particular the random variable $M$?

Work in this direction was initiated in 1992 by Ruci\'nski and Wormald \cite{rucinskiwormald}, who studied the case $F=K_{1,3}$, the star with three leaves, investigating the structure of $G_M$.

A major breakthrough in the area was the 2009 paper of Bohman \cite{bohman} on the case $F=K_3$, the so-called \emph{triangle-free process}. Using the differential equations method for random graph processes introduced by Rucinski and Wormald in \cite{rucinskiwormald} (see for instance \cite{wormald} for a survey of the subject), Bohman determined the order of $M$ with high probability.

\begin{theorem}[Bohman \cite{bohman}]
Let $G_M$ be the graph generated by the triangle-free process with $|V|=n$. Then with high probability,
\[c_1(\log n)^{\frac{1}{2}} n^{\frac{3}{2}} \leq M \leq c_2(\log n)^{\frac{1}{2}} n^{\frac{3}{2}},\]
for some constants $c_1$ and $c_2$.
\end{theorem}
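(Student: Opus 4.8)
The plan is to prove both bounds by Wormald's differential equations method, following the structure of Bohman's argument: run the triangle-free process for roughly $M_0:=\Theta(n^{3/2}(\log n)^{1/2})$ steps while tracking a family of random variables that evolve in an essentially deterministic way. I would work with the rescaled time $t=t(i):=i\,n^{-3/2}$, and the central statistic would be $Q(i)$, the number of \emph{open} pairs after $i$ steps --- pairs $uv\notin E(G_i)$ whose addition creates no triangle, equivalently pairs with no common neighbour in $G_i$ --- since the edge added at step $i+1$ is chosen uniformly at random from exactly these $Q(i)$ pairs.

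First I would carry out the heuristic computation. When the edge $xy$ is added at step $i+1$, a pair becomes closed precisely when it acquires a common neighbour, which to leading order happens to each open pair $\{x,z\}$ with $z\in N_{G_i}(y)$ and each open pair $\{y,z\}$ with $z\in N_{G_i}(x)$; since the typical degree at time $t$ is about $2t\sqrt n$ and the density of open pairs is about $Q(i)/\binom n2$, one expects
\[
\mathbb{E}\bigl[\,Q(i+1)-Q(i)\,\big|\,G_i\,\bigr]\;\approx\;-\,4t\sqrt n\cdot\frac{Q(i)}{\binom n2}.
\]
Writing $Q(i)\approx q(t)\binom n2$ and passing to the continuous limit yields $q'(t)=-8t\,q(t)$, hence $q(t)=e^{-4t^2}$ and the prediction $Q(i)\approx\binom n2 e^{-4t^2}$. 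This prediction stays above $1$ until $t$ has order $(\log n)^{1/2}$, which is exactly where the claimed order of magnitude for $M$ comes from, since the process runs precisely as long as $Q(i)>0$.

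The substantive work would be to show that $Q(i)$, together with a hierarchy of auxiliary statistics, remains within a small error window around its predicted trajectory throughout the first $M_0$ steps. I would track the degrees $\deg_{G_i}(v)$, the codegrees $|N_{G_i}(u)\cap N_{G_i}(v)|$, and --- crucially, since it governs the one-step change of $Q$ --- for each open pair $uv$ the number of vertices adjacent to exactly one of $u$ and $v$, together with one or two further ``extension'' counts a level deeper. The scheme is a bootstrap: assuming all the estimates hold up to step $i$, use them to bound both the conditional expected change and the maximum possible change of each variable at step $i+1$, then apply a martingale concentration inequality (Azuma, or Freedman's inequality to exploit the small variance) together with a union bound over all $\le M_0$ steps and all the variables to conclude that the estimates persist. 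I would design the error windows to be \emph{self-correcting}, so that a variable drifting towards the boundary of its window feels a restoring drift back towards the predicted value; this is what allows the tracking to survive up to the required time horizon $t=\Theta((\log n)^{1/2})$. Granting concentration of $Q$, the lower bound $M\ge c_1(\log n)^{1/2}n^{3/2}$ is immediate, since the process cannot halt while $Q>0$; for the upper bound, the tracking shows that $Q$ has fallen to a polynomially small value well before time $c_2(\log n)^{1/2}n^{3/2}$, after which $Q$ decreases by at least one at every step, so the process terminates within a negligible number of further steps, giving $M\le c_2(\log n)^{1/2}n^{3/2}$.

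The hard part will be the concentration step --- specifically, choosing the right hierarchy of auxiliary variables and the right error windows so that, on the one hand, the windows are tight enough that controlling the lower-level variables genuinely pins down the drift of $Q$, and on the other hand, the martingale increments are small enough relative to those windows that the concentration inequalities beat the union bound over the $\Theta(n^{3/2}(\log n)^{1/2})$ steps. Everything else --- the heuristic ODE and the endgame for the upper bound --- is routine by comparison.
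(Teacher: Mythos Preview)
The paper does not prove this statement at all: it is quoted as Bohman's theorem and cited to \cite{bohman}, with no proof reproduced. So there is no ``paper's own proof'' to compare your proposal against. Your outline is a reasonable high-level sketch of the differential equations method as applied to the triangle-free process --- track the number of open pairs and a hierarchy of auxiliary extension counts, derive the trajectory $q(t)=e^{-4t^2}$, and use martingale concentration with self-correcting error bars to show the variables follow their trajectories up to $t=\Theta((\log n)^{1/2})$ --- and this is indeed the shape of Bohman's argument. One historical nuance: the upper bound was known before Bohman's paper (it follows from a relatively soft argument), and Bohman's main contribution was the lower bound via the concentration machinery you describe; your sketch of the upper bound via ``$Q$ is polynomially small and then drops by at least one per step'' is a little glib, but not wrong in spirit. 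In any case, since the present paper only cites the result, your write-up is neither in agreement nor disagreement with it --- there is simply nothing here to match.
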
  

This result was later refined by Pontiveros, Griffiths and Morris \cite{pontiverosetal} and independently by Bohman and Keevash \cite{bohmankeevash}. Both sets of authors used a substantial extension of the differential equations method to determine $M$ asymptotically, with high probability. They also used their analysis of the triangle-free process to improve the known bounds on the Ramsey number $R(3,t)$.

There is now a large body of work on the $F$-free process for other graphs. See for instance \cite{bohmankeevash2, picollelli, warnke} and the references therein.

Now let $H$ be a (typically large) \emph{host graph}. A graph $G$ is \emph{$(H,F)$-saturated} if it is an $F$-free subgraph of $H$ but the addition of any new edge in $E(H)\setminus E(G)$ creates a copy of $F$. With this formulation the usual notion of $F$-saturation corresponds to $(K_n,F)$-saturation. Our particular interest is the case where the host graph $H$ is the hypercube $Q_d$. Where $Q_d$ is defined to have vertex set $\{0,1\}^d$, with two vertices joined by an edge if they differ in exactly one coordinate. 

Analogously to the usual triangle-free process, we define the \emph{$(Q_d,Q_2)$-free process} as the random nested sequence of subgraphs of $Q_d$ generated by repeatedly adding a new edge chosen uniformly at random from all those edges of $Q_d$ which do not create a copy of $Q_2$. We consider $Q_2$ as our forbidden subgraph since this is the most natural analogue of the triangle-free process (forbidding $K_3$ in $K_n$). However, the definition naturally extends to other forbidden subgraphs of $Q_d$. For comparison, the unconstrained random process with the hypercube as host graph is considered in \cite{AKS,BKL,erdosspencer}.

We will describe this process more formally using an equivalent definition based on random permutations which turns out to be easier to work with. The approach was first used (in the $K_n$ host graph setting) by Erd\H os, Suen and Winkler \cite{esw} and has also appeared in a more refined form, for instance in \cite{makai,warnke2}. We first choose a uniformly random permutation of $E(Q_d)$ giving a labelling of these edges as $e_1,e_2,\dots,e_{|E(Q_d)|}$. From this, form a nested sequence of subgraphs of $Q_d$ by looking at the edges in turn, and adding the next edge which does not create a copy of $Q_2$. More precisely, suppose that we have constructed graphs $G_0,G_1,\dots, G_i$ and have looked at edges $e_1,\dots,e_{t(i)}$. We look at the edges $e_{t(i)+1},e_{t(i)+2},\dots$ in turn, stopping when we get to some $e_j$ which can be added to $G_i$ without creating a copy of $Q_2$. We add $e_j$ to $G_i$ to form $G_{i+1}$ and let $t(i+1)=j$. The result is a nested sequence of subgraphs $G_0,G_1,\dots,G_M$ of $Q_d$ with $E(G_i)=i$ and $G_M$ being $(Q_d,Q_2)$-saturated. 

As in the graph case, our main question is what can be said about the random variable $M$? Our main result, proved in Section 2, is that with high probability, the subgraph of $Q_d$ generated by the $Q_2$-free process in $Q_d$ has at least $cd^{2/3} 2^d$ edges, for some constant $c$. We also establish a local version of this result: with high probability, almost all vertices have degree at least $cd^{2/3}$. 

In Section 3 we consider the differential equations heuristic in the hypercube context. This leads to the conjecture that the order of magnitude of the number of edges in the graph generated is $(\log d)^{1/3} d^{2/3} 2^d$. This conjecture is a consequence of the guiding heuristic for other constrained processes that the process should resemble an unconstrained graph process except in statistics involving the constraint. Indeed, let $O_i$ be the number of `open pairs' in $G_i$ (ie edges of $Q_d$ that can be added that can be added to $G_i$ without producing a copy of $Q_2$). Clearly, for all $i$, we have that $i+O_i$ is an upper bound for the size of the final graph. If the heuristic is correct for our process then $O_i$ should be close to $(1-p^3)^{d-1}d2^{d-1}$. Under this assumption, when $i=O\left((\log d)^{1/3} d^{2/3} 2^d\right)$ the upper bound $i+O_i$ is also $O\left((\log d)^{1/3} d^{2/3} 2^d\right)$ and so this will be the size of the final graph. We also see that the proof strategy used to make the differential equations method rigorous followed in the ordinary graph triangle-free process is unlikely to work in the hypercube context without some significant new ideas. Thus the problem of analysing this process may be a natural testing ground for extending the differential equations method further or developing new techniques. 

Related to this heuristic, we now describing how our process relates to an important general framework for constrained graph process introduced by Bennett and Bohman \cite{benboh}. This is based on independent sets in an auxiliary hypergraph. For the usual $F$-free process this auxiliary hypergraph $H$ has \emph{vertex set} $V(H)=E(K_n)$, with a set $\{e_1,\dots, e_r\}\subset V(H)$ forming an edge of $H$ precisely when $e_1,\dots, e_r$ form a copy of $F$ when viewed as edges of $K_n$. Now, maximal independent sets in $H$ correspond to $F$-saturated graphs. The $F$-free process corresponds to the natural random greedy algorithm for finding a maximal independent set in $H$. Namely, we grow our independent set by adding a new random vertex of $H$ at each time step subject to the condition that the set remains independent (ie contains no edge of $H$). Bennett and Bohman's result gives a lower bound of $\Omega\left( N\left(\frac{\log N}{D}\right)^{\frac{1}{r-1}}\right)$ on the independent set generated by this random greedy algorithm in an $r$-uniform, $D$-regular hypergraph with $N$ vertices, subject to certain degree and co-degree conditions. One of these conditions is that $H$ is reasonably dense in the sense that $D>N^{\epsilon}$. This bound is consistent with the heuristic mentioned above that the independent set generated after $i$ steps should resemble a subset generated by independently picking each element of $V$ with probability $p=i/N$ except in that it contains no edges of $H$. 

Our $(Q_d,Q_2)$-free process can also be expressed in this framework. The auxiliary hypergraph has $N=d2^{d-1}$, $r=4$ (each $Q_2$ consists of $4$ edges) and $D=d-1$ (each edge of $Q_d$ lies in $d-1$ copies of $Q_2$). Notice that $D$ is only $O(\log N)$ and so this hypergraph is too sparse to apply the Bennett and Bohman result. The $(Q_d,Q_2)$-free process then provides a natural example of a constrained graph process to which the Bennett and Bohman result does not apply. 

We conclude, in Section 4, by raising some related open problems.

Finally, we note that, complementing these probabilistic questions, saturated graphs have been studied from an extremal perspective. Indeed, the well-studied \emph{Tur\'an number} of $F$, denoted by $ex(n,F)$, can be defined as the maximum number of edges in an $F$-saturated graph on $n$ vertices. As a counterpart to this, the \emph{saturation number} of $F$, denoted by sat$(n,F)$ is the minimum number of edges in an $F$-saturated graph. See the surveys \cite{furedisimonovits} and \cite{faudrees,pikhurko} and many references therein for more on Tur\'an and saturation numbers. Both Tur\'an and saturation numbers have been studied for the host graph $Q_d$ (see for instance \cite{AlonKS,BHLL,erdos,johnsontalbot} for the former and \cite{GK,johnsonpinto,MNS} for the later). However, to our knowledge, this associated random process has not.

\section{Main Result}

\begin{theorem}\label{process}
Let $M$ be the number of edges in the subgraph $G_M$ of $Q_d$ generated by the $(Q_d,Q_2)$-free process. With high probability, $M>cd^{2/3} 2^d$, for some constant, $c$.
\end{theorem}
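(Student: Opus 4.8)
The plan is to use the random-permutation description of the process and reduce the theorem to an estimate for the degree of a single vertex. First note that every $4$-cycle of $Q_d$ is a $2$-dimensional subcube, so the $Q_2$-free process is exactly the $C_4$-free process on $Q_d$, and each edge of $Q_d$ lies in precisely $d-1$ such $4$-cycles. Fix a vertex $v$, with incident edges $f_1,\dots,f_d$. It suffices to prove that for every fixed $v$ we have $\Pr[\deg_{G_M}(v)\ge cd^{2/3}]=1-o(1)$ as $d\to\infty$: this gives $\mathbb{E}[\#\{v:\deg_{G_M}(v)<cd^{2/3}\}]=o(2^d)$, so by Markov's inequality $\#\{v:\deg_{G_M}(v)<cd^{2/3}\}=o(2^d)$ with high probability, and then $M=\tfrac12\sum_v\deg_{G_M}(v)\ge\tfrac12 cd^{2/3}(1-o(1))2^d$, which is the claimed bound after relabelling $c$.

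For the single-vertex estimate I would use a sufficient condition for an edge to survive. Given the random ordering $e_1,\dots,e_{|E(Q_d)|}$, say $f_i$ is \emph{good} if, for each of the $d-1$ four-cycles $C$ through $f_i$, at least one of the three edges of $C\setminus\{f_i\}$ comes after $f_i$ in the ordering. If $f_i$ is good then when the process examines $f_i$ no four-cycle through $f_i$ has all of its other three edges already present, so $f_i$ is added; hence $\deg_{G_M}(v)\ge\#\{i:f_i\text{ good}\}$. The useful structural feature is that the $d-1$ triples of edges arising here are pairwise disjoint and avoid $f_i$ (each triple consists of another star edge $f_\ell$ together with one edge in the link of $v+e_i$ and one in the link of $v+e_\ell$). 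Conditioning on the relative order of $f_1,\dots,f_d$, if $f_i$ is among the first $\varepsilon d^{2/3}$ star edges then its overall position is about a $(\mathrm{rank})/d$ fraction of the way through the ordering, so for each of the (at most $\varepsilon d^{2/3}$) earlier star edges $f_\ell$ the probability that the two link edges of the corresponding four-cycle both precede $f_i$ is $O(\varepsilon^2 d^{-2/3})$; a union bound over $\ell$ then shows $f_i$ is good with probability at least $\tfrac12$, provided $\varepsilon$ is a small enough absolute constant.

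It remains to show that $X:=\#\{i:f_i\text{ good}\}$, which has mean $\Theta(d^{2/3})$, is concentrated, i.e. that $\Pr[X<cd^{2/3}]=o(1)$ for suitably small $c$. Here the convenient point is that, after conditioning on the actual positions of the $d$ star edges, the events ``$f_i$ good'' for different $i$ depend on \emph{disjoint} sets of non-star (link) edges and are negatively correlated (those edges compete for the same slots of the permutation); restricting attention to the $\approx\varepsilon d^{2/3}$ star edges of smallest rank therefore expresses a lower bound for $X$ as a sum of negatively correlated indicators with constant conditional means, so Chebyshev's inequality (or a bounded-differences martingale) gives $X\ge cd^{2/3}$ with conditional probability $1-O(d^{-2/3})$ for all but an $o(1)$-fraction of configurations of star-edge positions. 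Turning this correlation heuristic into a rigorous $1-o(1)$ statement — controlling the mild dependence coming from the shared star edges and from the random overall positions of the $f_i$ — is the main obstacle; the remaining steps (the reduction above and the single-edge estimate) are routine. Combining the single-vertex bound with the reduction in the first paragraph then completes the proof.
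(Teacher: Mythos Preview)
Your proposal has both a strategic misstep and a genuine technical error.

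\textbf{Strategic issue.} The detour through single-vertex degrees is unnecessary and makes the problem harder than it needs to be. The paper applies the second moment method directly to the \emph{global} count $A=\sum_{e\in E(Q_d)} A_e$ of good edges. Since each $A_e$ depends only on the $3(d-1)$ edges lying in a $Q_2$ through $e$, it is independent of all but at most $9d^2$ of the other $A_f$; this immediately gives $\mathrm{Var}(A)\le \mathbb{E}(A)+9d^3 2^d=o(\mathbb{E}(A)^2)$ and Chebyshev finishes. Your reduction instead forces you to prove a per-vertex concentration statement (essentially the paper's separate Theorem~\ref{degree}), in which \emph{no} two of the relevant indicators are independent, and then sum. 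You are proving a harder statement as a lemma for an easier one.

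\textbf{Technical error.} Your key structural claim, that after conditioning on the star-edge positions the events ``$f_i$ good'' for different $i$ depend on \emph{disjoint} sets of non-star edges, is false. The $4$-cycle through $f_i$ in direction $k$ and the $4$-cycle through $f_k$ in direction $i$ are one and the same cycle, so both events depend on the same two link edges $(v+e_i,\,v+e_i+e_k)$ and $(v+e_k,\,v+e_i+e_k)$. Thus the disjointness/negative-correlation heuristic you rely on for concentration does not hold as stated, and this is precisely the dependence that the paper has to handle by an explicit covariance computation in its degree theorem. Your own acknowledgement that ``turning this correlation heuristic into a rigorous $1-o(1)$ statement \dots\ is the main obstacle'' is accurate: as written, the concentration step is not a proof.

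In short, the single-edge ``good'' idea and the probability estimate are on the right track, but you should apply second moments to the total number of good edges over all of $E(Q_d)$ rather than to the edges at a fixed vertex; the global dependency graph has bounded degree $O(d^2)$, which is what makes the variance bound trivial.
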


As we shall see, the constant $c$ can be taken to be arbitrarily close to $1/e$.

The proof uses the random permutation formulation of the process. We identify a local condition on the permutation which guarantees that a particular edge appears in the final graph $G_M$. Calculating the probability that this condition is satisfied gives a lower bound on the expected number of edges. The fact that the condition is a local one means that dependence between edges is limited and the second moment method gives a lower bound on $M$ which holds with high probability. 

\begin{proof}[Proof of Theorem \ref{process}]
Generate a random permutation of the edges of $Q_d$ by assigning to each edge $e$, a random variable $T_e$, where $T_e$ is uniformly distributed in the interval $[0,1]$. We say that $e$ precedes $f$ in our order if $T_e<T_f$.

Let $G_M$ be the saturated graph yielded by following the $(Q_d,Q_2)$-free process on this permutation.

We say that an edge $e\in E(Q_d)$ is \emph{good} if, for every $Q_2$ containing $e$, the last of its four edges in our ordering is not $e$. It is easy to see that if $e$ is good, then $e$ is an edge of $G_M$. 

Let $A_e$ denote the indicator random variable taking the value 1 if $e$ is good and 0 otherwise, and let $A=\sum_{e\in E(Q_d)} A_e$ be the total number of good edges. Considering how the permutation is generated from the variables $T_e$, we obtain:
\begin{align*}
\mathbb{P}(A_e=1)&=\int_0^1 (1-x^3)^{d-1} dx\\
&\geq \int_0^{d^{-\frac{1}{3}}} (1-x^3)^{d-1} dx\\
&\geq d^{-\frac{1}{3}}\left(1-\frac{1}{d}\right)^{d-1} \quad\text{(as the integrand is decreasing in $x$)}\\
\mathbb{P}(A_e=1)&\geq \left(1/e+o(1)\right)d^{-\frac{1}{3}},
\end{align*}
for large enough $d$.
 
Since $Q_d$ has $d2^{d-1}$ edges, linearity of expectation gives that: 
\[
\mathbb{E}(A)\geq(1/e+o(1)) d^{2/3}2^{d-1}.
\]

The event $A_e$ depends only on the variables $T_f$ where $f$ is one of the $3(d-1)$ edges contained in a $Q_2$ through $e$. It follows that $A_e$ is independent of all but at most $9d^2$ other $A_f$.   

\begin{align*}
\text{Var}(A)&=d2^{d-1}\text{Var}(A_e)+\sum_{e\not=f} Cov(A_e, A_f)\\
&\leq \mathbb{E}(A)+ 9d^3 2^d\\
&=o(\mathbb{E}(A)^2).
\end{align*}
Thus by Chebychev's inequality, $A\geq c d^{2/3}2^d$ with high probability, for some $c$  (which can be taken to be arbitrarily close to $1/e$). This concludes the proof, since $M\geq A$.
\end{proof}

A slightly more careful calculation gives that $d^{2/3}2^{d-1}$ is the correct order of magnitude of $\mathbb{E}(A)$ so $A=\Theta(d^{2/3}2^{d-1})$ with high probability. However, because the property of being good is sufficent but not necessary for an edge to be in $G_M$, this observation gives no upper bound for $M$. 

Notice that the way in which we bounded the integral, means that only edges with $T_e\leq d^{-1/3}$ are considered. This means that with high probability, not only do we finish the process with at least $c d^{2/3}2^d$ edges, but at least this many edges must be added from among the first $d^{2/3}2^{d-1}$ edges considered. 

The same approach can be used to give some information on the degrees in $G_M$. The degree of $v$ in $G_M$ is bounded by the number of good edges among the edges of $Q_d$ incident to $v$. Unfortunately, there are no independent pairs among the events that each of these edges is good. However, the dependence is very limited and so a local analogue to Theorem \ref{process} can be established.  

\begin{theorem}\label{degree}
Let $v$ be a fixed vertex of $Q_d$ and let $G_M$ be the subgraph of $Q_d$ generated by the $(Q_d,Q_2)$-free process. With high probability, the degree of $v$ in $G_M$ is at least $cd^{2/3}$, for some constant, $c$.
\end{theorem}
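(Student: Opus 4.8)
The plan is to mimic the proof of Theorem~\ref{process}, but localised to the $d$ edges incident to a fixed vertex $v$. Fix $v\in V(Q_d)$ and let $e_1,\dots,e_d$ be the edges of $Q_d$ at $v$. As before, let $A_{e_i}$ be the indicator that $e_i$ is good, and set $D=\sum_{i=1}^d A_{e_i}$, which is a lower bound for $\deg_{G_M}(v)$. By the identical computation to the one in the proof of Theorem~\ref{process}, $\mathbb{P}(A_{e_i}=1)=\int_0^1(1-x^3)^{d-1}\,dx\geq(1/e+o(1))d^{-1/3}$, so by linearity of expectation $\mathbb{E}(D)\geq(1/e+o(1))d^{2/3}$. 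Since this expectation tends to infinity, it suffices to show $\mathrm{Var}(D)=o(\mathbb{E}(D)^2)$ and apply Chebyshev's inequality; the conclusion for a randomly chosen $v$ then follows since by vertex-transitivity the distribution of $\deg_{G_M}(v)$ does not depend on $v$ (indeed the statement holds for \emph{every} $v$).

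The key point is controlling the covariances $\mathrm{Cov}(A_{e_i},A_{e_j})$ for $i\neq j$. As remarked in the excerpt, no two of these events are independent: any two edges $e_i,e_j$ at $v$ together span a unique $Q_2$, so $A_{e_i}$ and $A_{e_j}$ each depend on the four $T$-values of that common square. Nonetheless the dependence is weak, and the natural way to quantify this is to bound each covariance crudely by $\mathrm{Cov}(A_{e_i},A_{e_j})\leq \mathbb{P}(A_{e_i}=1\wedge A_{e_j}=1)\leq \mathbb{P}(A_{e_i}=1)=O(d^{-1/3})$. Summing over the at most $d^2$ ordered pairs gives $\sum_{i\neq j}\mathrm{Cov}(A_{e_i},A_{e_j})=O(d^{5/3})$, while $\mathbb{E}(D)^2=\Omega(d^{4/3})$ — so this trivial bound is \emph{not} quite enough. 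One therefore needs a genuinely small bound on the covariance: I would show $\mathrm{Cov}(A_{e_i},A_{e_j})=O(d^{-1})$ (or better), so that $\sum_{i\neq j}\mathrm{Cov}(A_{e_i},A_{e_j})=O(d)=o(d^{4/3})=o(\mathbb{E}(D)^2)$, together with the diagonal term $\sum_i \mathrm{Var}(A_{e_i})\leq\mathbb{E}(D)=o(\mathbb{E}(D)^2)$.

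To get $\mathrm{Cov}(A_{e_i},A_{e_j})=O(d^{-1})$, write $A_{e_i}=B_{e_i}\cdot C_{e_i}$, where $B_{e_i}$ is the indicator that $e_i$ is not the last edge in the \emph{one} square containing both $e_i$ and $e_j$, and $C_{e_i}$ is the indicator that $e_i$ is not last in any of the other $d-2$ squares through $e_i$; define $B_{e_j},C_{e_j}$ symmetrically. Conditioning on the configuration of $T$-values on the $6$ edges of the union of the at most two squares meeting at the pair $\{e_i,e_j\}$, the variables $C_{e_i}$ and $C_{e_j}$ become conditionally independent (they involve disjoint edge sets once the shared edges are fixed), and each conditional factor is $(1+o(1))d^{-1/3}$ uniformly over the relevant range of the conditioning, up to a correction of relative size $O(1/d)$ coming from the few excluded/shared edges. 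Carrying the conditioning through and comparing with $\mathbb{P}(A_{e_i}=1)\mathbb{P}(A_{e_j}=1)$ yields a covariance of order $d^{-1/3}\cdot d^{-1/3}\cdot O(1/d)=O(d^{-5/3})$, comfortably enough; even a weaker, less delicate argument giving $O(d^{-1})$ suffices. The main obstacle is precisely this step: unlike in Theorem~\ref{process}, where the sheer number ($\Theta(d^2 2^d)$) of independent pairs swamped the $O(d^3 2^d)$ dependent contribution, here every pair is dependent, so one must extract a quantitative decay rate for the covariance rather than merely invoking ``limited dependence.'' Once that estimate is in hand, Chebyshev finishes the proof exactly as before.
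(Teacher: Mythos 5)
Your overall strategy is the same as the paper's: lower bound $\deg_{G_M}(v)$ by the number of good edges at $v$, compute the expectation exactly as in Theorem~\ref{process}, and then win by showing each pairwise covariance is small enough for Chebyshev. Your preliminary accounting is also right: the trivial bound $\mathrm{Cov}(A_{e_i},A_{e_j})\le p=O(d^{-1/3})$ gives $O(d^{5/3})$, which is not $o(d^{4/3})$, so a quantitative covariance estimate is genuinely needed (in fact anything of order $o(d^{-2/3})$ per pair suffices, which is weaker than the $O(d^{-1})$ you aim for; the paper proves $O(d^{-3/4})$).

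The gap is in your key step. You claim that after conditioning on the $T$-values of the $O(1)$ edges of the square spanned by $e_i$ and $e_j$, the events $C_{e_i}$ and $C_{e_j}$ (``not last in any of the other $d-2$ squares'') become conditionally independent because the remaining squares are edge-disjoint. This is false. Write $e_i=\{v,v+\epsilon_i\}$ and $e_j=\{v,v+\epsilon_j\}$. For \emph{every} direction $k\notin\{i,j\}$, the square through $e_i$ in direction $k$ and the square through $e_j$ in direction $k$ both contain the edge $e_k=\{v,v+\epsilon_k\}$. So the two families of ``other'' squares share $d-2$ edges (all the remaining edges at $v$), not a bounded number, and no conditioning on $O(1)$ edges makes $C_{e_i}$ and $C_{e_j}$ independent. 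This is exactly the dependence that has to be quantified, and it also makes your target estimate $O(d^{-5/3})$ doubtful: the relative correction from these shared edges at $x,y\sim d^{-1/3}$ is of order $(d-2)x^3y^2\sim d^{-2/3}$, not $O(1/d)$. The paper handles this by computing the joint probability directly as
\[
r=\int_0^1\!\!\int_0^1\bigl(1-x^3-y^3+x^2y^2\min\{x,y\}\bigr)^{d-2}\bigl(1-(\max\{x,y\})^2\bigr)\,dx\,dy,
\]
where the term $x^2y^2\min\{x,y\}$ is precisely the inclusion--exclusion correction for the shared edge $e_k$ in each direction $k$, and then comparing the integrand with $(1-x^3)^{d-1}(1-y^3)^{d-1}$ on the ranges $\max\{x,y\}>d^{-1/4}$ and $\max\{x,y\}\le d^{-1/4}$ separately to get $r-p^2=O(d^{-3/4})=o(d^{-2/3})$. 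Your argument could be repaired by conditioning on all $d-2$ shared edges at $v$ (after which the private edges really are independent), but that is essentially this computation; as written, the conditional independence claim on which your covariance bound rests does not hold.
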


\begin{proof}
Let $e_1,\dots,e_d$ be the edges of $Q_d$ incident to $v$. We define good edges as in the proof of Theorem \ref{process}. Let $A_i$ be the indicator variable of the event `$e_i$ is good', and $D_v=\sum_{i=1}^d A_i$. We have that
\[
\text{Var}(D_v)=d\text{Var}(A_e)+\sum_{i\not=j} \text{Cov}(A_i,A_j)=dp(1-p)+d(d-1)(r-p^2)
\]
where
\begin{align*}
p&=\mathbb{P}(A_1=1)=cd^{-1/3}\\
r&=\mathbb{P}(A_1=1,A_2=1)
\end{align*}

It will suffice to show that $r-p^2=o(d^{-2/3})$. From this we deduce that $\text{Var}(D_v)=o(d^{4/3})=o(\mathbb{E}(D_v)^2)$, and the result follows as in the proof of Theorem \ref{process}. 

Now, using the same method of generating a random permutation as in the proof of Theorem \ref{process}, we have:
\begin{align*}
p^2&=\int_0^1\int_0^1 f(x,y,d)\, dx\, dy\\
r&=\int_0^1\int_0^1 g(x,y,d)\, dx\, dy
\end{align*}
where
\begin{align*}
f(x,y,d)&=(1-x^3)^{d-1}(1-y^3)^{d-1}\\
g(x,y,d)&=(1-x^3-y^3+x^2y^2\min\{x,y\})^{d-2}(1-(\max\{x,y\})^2).
\end{align*}

It is easy to check that if one of $x,y$ is greater than $d^{-1/4}$ then both $f(x,y,d)$ and $g(x,y,d)$ are at most $\exp(d^{-1/4})$ and so the contribution to $r-p^2$ from this range of $x,y$ is certainly $o(d^{-2/3})$.

On the other hand, if $x<y<d^{-1/4}$ then, writing 
\[
g(x,y,d)-f(x,y,d)=(1-x^3)^{d-1}(1-y^3)^{d-1}\left(h(x,y)-1\right)
\]
where
\begin{align*}
h(x,y)&=\left(\frac{1-x^3-y^3+x^3y^2}{(1-x^3)(1-y^3)}\right)^{d-2}\left(\frac{1-y^2}{(1-x^3)(1-y^3)}\right)\\
&=\left(1+\frac{x^3y^2-x^3y^3}{(1-x^3)(1-y^3)}\right)^{d-2}\left(1+\frac{x^3+y^3-x^3y^3-y^2}{(1-x^3)(1-y^3)}\right)\\
&<\left(1+(d-2)2d^{-5/4}\right)\left(1+2d^{-3/4}\right)\\
&<\left(1+3d^{-1/4}\right),
\end{align*}

we conclude that
\[
\int_0^{d^{-1/4}}\int_0^{d^{-1/4}} g(x,y,d)-f(x,y,d)\, dx\, dy\leq 3(d^{-1/4})^3.
\]
That is, the contribution to $r-p^2$ from this range of $x,y$ is at most $3d^{-3/4}=o(d^{-2/3})$ as required.
\end{proof}

We do not know whether or not, with high probability \emph{all} vertices have degree at least $cd^{2/3}$.

\section{Heuristic}

For the triangle-free process, Bohman \cite{bohman} introduces a heuristic that assumes certain random variables follow some trajectories closely. Using this assumption he deduces the values of those trajectories in order to bound the number of edges in the resulting graph. This approach can be made rigorous using martingales. 

We use the analogous heuristic for the $(Q_d,Q_2)$-free process to suggest a possible order for $M$. However, we also point out some differences between the $(Q_d, Q_2)$-free process and the triangle-free process that cause difficulties in making this argument rigorous.

Let $G_0, \dots, G_M$ be the sequence of graphs generated by the $(Q_d, Q_2)$-free process. Let $u$ and $v$ be a pair of vertices that are adjacent in $Q_d$. We say that $uv$ is open in $G_i$ if there is no path of three $G_i$-edges that connect $u$ to $v$. In other words $uv$ is open if adding it to $G_i$ does not form a copy of $Q_2$. We write $O_i$ for the number of open pairs in $G_i$.  This definition of open pairs is analogous to a definition in \cite{bohman}.  

We also define, for each $Q_d$-adjacent pair of vertices $u$ and $v$, three other random variables. Let $W_i(uv)$ denote the number of paths of length 3 from $u$ to $v$ consisting of three open pairs in $G_i$, let $X_i(uv)$ be the number of paths of length 3 from $u$ to $v$ consisting of two open pairs and one $G_i$-edge and let $Y_i(uv)$ count the paths of length 3 from $u$ to $v$ consisting of one open pair and two $G_i$-edges.

For convenience, we also introduce a scaling $t=\frac{i}{d^{2/3}2^d}$. We assume there are continuous functions $q, w, x$ and $y$ such that for all $i$ and all $Q_d$-adjacent $u$ and $v$:
\[
O_i\approx q(t)d2^d, \quad W_i(uv)\approx w(t)d, \quad X_i(uv)\approx x(t)d^{2/3}, \quad Y_i(uv)\approx y(t)d^{1/3}.
\]

Note that adding a single edge $uv$ to $G_i$ to form $G_{i+1}$ removes $Y_i(uv)$ open edges. Thus for small $\epsilon$, we expect
\[
q(t+\epsilon)d2^d  \approx O_{i+\epsilon d^{2/3}2^d}\approx O_i- \epsilon d^{2/3}2^d\cdot y(t)d^{1/3}\approx (q(t)-\epsilon y(t))d2^d .
\]

This suggests that $\frac{dq}{dt}=-y$. Similar arguments give:

\[\frac{dx}{dt}=\frac{3w}{q}-\frac{2xy}{q},\qquad \frac{dy}{dt}=\frac{2x}{q}-\frac{y^2}{q},\qquad\frac{dw}{dt}=\frac{-3yw}{q}.\]

Solving these equations with initial conditions $q(0)=1/2,w(0)=1,x(0)=y(0)=0$ gives 
\[
q(t)=\frac{1}{2}e^{-8t^3}, \quad w(t)=e^{-24t^3}, \quad x(t)=6te^{-16t^3}, \quad y(t)=12t^2e^{-8t^3}.
\]

For any $i$, the final number of edges in the process is bounded from above by $i+O(i)$ and from below by $i$. If indeed $O(i)\approx \frac{1}{2}e^{-8t^3} d2^d$, then when $t=\Theta(\log^{1/3} d)$, these bounds are both $\Theta\left( (\log d)^{\frac{1}{3}} d^{\frac{2}{3}} 2^d\right)$.  

As we noted in the introduction, this solution fits with the heuristic that a constrained process should resemble an unconstrained process except in statistics involving the constraint. 

Motivated by this heuristic, we propose the following,

\begin{conjecture}
Let $G_M$ be the graph generated by the $(Q_d,Q_2)$-free process. With high probability,
\[c_1(\log d)^{1/3} d^{2/3} 2^d \leq M \leq c_2(\log d)^{1/3} d^{2/3} 2^d,\]
for some constants $c_1$ and $c_2$.
\end{conjecture}

For the triangle-free process, Bohman uses martingales to show that with high probability all the relevant random variables do indeed follow their trajectories closely. By contrast, in our process the situation is more complicated; the random variables we use to track the evolution of the graph do not all follow the trajectory indicated by the differential equations heuristic.

Associated with the $(Q_d, Q_2)$-free process, we have a sequence of graphs, $H(j)$, for $j=0, \dots, n2^{n-1}$, where $H(j)$ is the graph formed by the first $j$ edges in the randomly chosen permutation.  This nested sequence of graphs is a natural analogue of the unconstrained Erd\H{o}s-Renyi random graph process 

We again let $G_i$ denote the graphs of the $(Q_d, Q_2)$-free process for $i=0,\dots, M$, but consider $i$ as a function of $j$. That is, we write $i(j)$ for the number of edges added from among the first $j$ edges looked at. For $Q_d$-adjacent vertices $u$ and $v$, note that $Y_{i(j)}(uv)=0$  whenever $u$ and $v$ are isolated in $H(j)$. Thus,
\begin{align*}
\mathbb{P}(Y_{i(j)}(uv)=0)&\geq \frac{\binom{d2^{d-1}-2d}{j}}{\binom{d2^{d-1}}{j}}\\
&= \frac{(d2^{d-1}-j) \dotsm (d2^{d-1}-2d+1-j) }{(d2^{d-1})\dotsm(d2^{d-1}-2d+1)}\\
&\geq \left(1-\frac{j+d}{d2^{d-1}}\right)^{2d-1}\\
&\geq \exp\left(-\frac{j+d}{2^{d-2}}\right).
\end{align*}
 
It follows that there is some constant $c$ such that while $j\leq cd2^{d-1}$, we have, in expectation, a large number of pairs $uv$ with $Y_{i(j)}(uv)=0$. It seems likely that $i$ is approximately concave as a function of $j$ (the number of edges added should grow faster early on in the process when fewer edges have been looked at). If true this would imply that for some $uv$, the random variable $Y_{i(j)}(uv)$ equals zero for a constant proportion of the process. Thus, unlike in the triangle-free process, we will not typically have every variable following its expected trajectory closely. It is still possible that this approach can be salvaged, for instance by showing that the collection of variables are approximated by their trajectories in some weaker sense, but this does not appear to be straightforward.
 
\section{Further Questions}

Given the apparent obstacles to adapting the techniques from the triangle-free process to the hypercube, the main open problem is to develop tools to understand the $Q_2$-free process in $Q_d$. This could involve either refining the differential equations method or introducing a completely new approach.

The most immediate open problem is to give a good upper bound for $M$ and in particular to answer the following question:

\begin{question}
Is the true order of magnitude of $M$ given by $(\log d)^{1/3} d^{2/3} 2^d$ as predicted by the differential equations heuristic?
\end{question}

The number of edges is just one of the properties of $G_M$ that could be considered. It would be interesting to study other properties of this graph, for example the minimum and maximum degree of $G_M$. Note that Theorem \ref{degree} does not give any information about the minimum degree since it bounds almost all degrees rather than every degree.  

Bohman proves that with high probability the triangle-free process produces a graph with no large independent set. This was used to give improved lower bounds on the Ramsey number $R(3,k)$. In the hypercube there is no analogous Ramsey result; indeed for any $d$ there is a 2-colouring of $e(Q_d)$ with no monochromatic $Q_2$. Nevertheless, one could ask about the existence of empty subcubes.

\begin{question}
What can be said about the number of copies of $Q_k$ in $Q_n$ which contain no edges of $G_M$? For which $k$ is the expected number of empty $Q_k$ bounded away from 0?
\end{question}

More generally, what can said about the appearance of fixed subgraphs in $G_M$? This question has been addressed for the triangle-free process by Wolfovitz \cite{wolfovitz} (sparse subgraphs) and Gerke and Makai \cite{gerkemakai} (dense subgraphs). Extensions of their results to the $H$-free process were proved by Warnke \cite{warnke3}.

Finally, we studied the $Q_2$-free process as a natural special case of the $F$-free process. What can be said about the $F$-free process in the hypercube for other fixed graphs $F$? Two particularly appealing instances for $F$ are fixed dimension subcubes $Q_k$ and the star $K_{1,t}$ (the bounded degree process). 

\begin{question}
what can be said about the graph $G_M$ generated by the $F$-free process in $Q_d$? In particular when $F=Q_k$ or $F=K_{1,t}$.
\end{question}

\section{Acknowledgements}

We thank Lutz Warnke and an anonymous referee for helpful comments and suggestions.

\end{document}